\newtheorem{theorem}{Theorem}
\newtheorem{definition}[theorem]{Definition}
\newtheorem{lemma}[theorem]{Lemma}
\newtheorem{proposition}[theorem]{Proposition}
\newenvironment{proof}[1][Proof]{\noindent\textbf{#1.} }{\ \rule{0.5em}{0.5em}}
\begin{document}

\title{Conformal maps between pseudo-Finsler spaces}
\author{Nicoleta Voicu \\
"Transilvania" University, 50, Iuliu Maniu str., Brasov, Romania\\
e-mail: nico.voicu@unitbv.ro}
\date{}
\maketitle

\begin{abstract}
The paper aims to initiate a systematic study of conformal mappings between
Finsler spacetimes and, more generally, between pseudo-Finsler spaces. This
is done by extending several results in pseudo-Riemannian geometry which are
necessary for field-theoretical applications and by proposing a technique
which reduces a series of problems involving pseudo-Finslerian conformal
vector fields to their pseudo-Riemannian counterparts. Also, we point out,
by constructing classes of examples, that conformal groups of flat (locally
Minkowskian) pseudo-Finsler spaces can be much richer than both flat
Finslerian and pseudo-Euclidean conformal groups.
\end{abstract}

\textbf{Keywords}: pseudo-Finsler space, Finsler spacetime, conformal
mapping, conformal vector field, Killing vector field

\textbf{MSC 2010: }53B40, 53C50, 53C60, 53C80

\section{Introduction}

In field theory, conformal maps are fundamental for our understanding of
spacetime. Moreover, the existence of a conformal vector field on a manifold
can provide valuable information, which can go up to full classification
results, \cite{Alekseevsky}, \cite{Camargo}, \cite{Caminha}, \cite{Kuhnel
survey},\ on the metric structure.

Among the applications of (pseudo-)Finsler geometry, field-theoretical ones
are the most numerous, e.g., \cite{Asanov}, \cite{Barletta}, \cite%
{Bogoslovsky}, \cite{Gibbons}, \cite{Javaloyes}, \cite{Kouretsis}, \cite%
{Kostelecky}, \cite{Stavrinos}, \cite{Vacaru}. But these applications
typically require metrics to be of Lorentzian signature. And, while on
conformal maps between positive definite Finsler spaces there exists quite a
rich literature, \cite{Aikou}, \cite{Asanjarani}, \cite{Cheng}, \cite%
{Bidabad}, \cite{Hashiguchi}, \cite{Matsumoto}, \cite{Youssef}, \cite{Zhang}%
, in pseudo-Finsler spaces, the situation is completely different. Apart
from a very few papers dedicated to the particular case of isometries, \cite%
{Herrera}, \cite{Torrome-isometries} or to a particular metric, \cite{Pavlov}%
, to the best of our knowledge, even basic questions related to conformal
transformations have not been tackled yet.

\bigskip

Conformal groups of pseudo-Finsler metrics have a much more complicated -
and more interesting - structure than both pseudo-Riemannian and Finslerian
conformal groups. To prove this statement, we present in Section \ref%
{BM_example} some classes of examples of flat (locally Minkowski)
pseudo-Finsler spaces whose conformal symmetries depend on arbitrary
functions. Comparatively, in dimension $n\geq 3,$ conformal symmetries of a
pseudo-Euclidean can only be similarities, inversions and compositions
thereof, \cite{Haantjes}, while the only conformal symmetries of a
non-Euclidean flat Finsler space are similarities, \cite{Matveev}. This
hints at the fact that extending results from either pseudo-Riemannian or
Finsler geometry to pseudo-Finsler spaces can be far from straightforward -
and some of these results might very well fail when passing to
pseudo-Finsler spaces.

\bigskip\ 

In the following sections, we focus on two topics:

1. \textit{The behavior of geodesics under conformal mappings.} Here, we
prove that several results in pseudo-Riemannian geometry (which are
fundamental for general relativity)\ can still be extended to pseudo-Finsler
spaces:

- In dimension greater than 1, any mapping between two pseudo-Finsler
structures which is both conformal and projective is a similarity. In other
words, Weyl's statement (e.g., \cite{Cheng}) that \textit{projective and
conformal properties of a metric space univocally determine its metric up to
a dilation factor} remains true in pseudo-Finsler spaces.

- Lightlike geodesics are preserved, up to re-parametrization, under
arbitrary conformal mappings.

- A conservation law for conformal vector fields along lightlike geodesics.

\bigskip

2. \textit{Conformal vector fields}. In positive definite Finsler spaces,
the technique of averaged Riemannian metrics allows one to prove profound
results regarding conformal transformations, by reducing the corresponding
problems to their Riemannian counterparts, \cite{Matveev}. But,
unfortunately, this technique is not available in pseudo-Finsler spaces, as
noticed in \cite{Torrome-isometries}.

Still, dealing with conformal vector fields, we can find a partial
substitute for this method. Given a pseudo-Finslerian metric tensor $g$ on
some manifold $M,$ an associated Riemannian metric is a pseudo-Riemannian
metric $g^{\xi }:=g\circ \xi ,$ where $\xi $ is a vector field on $M.$
Associated Riemannian metrics have a series of appealing properties (e.g.,
smoothness, same signature as $g$) and behave well under conformal
transformations of $g;$ more precisely, we show (Lemma \ref%
{conformal_vector_fields_lemma}) that,\ if $\xi $ is a conformal vector
field for a pseudo-Finsler metric $g,$ then $\xi $ is also a conformal
vector field for $g^{\xi }.$ This way, some results in pseudo-Riemannian
geometry become available in the more general context of Finsler metrics. As
an example, we extend to pseudo-Finsler spaces two results on Killing vector
fields in \cite{Sanchez}.

Also, we prove that any essential conformal vector field of a pseudo-Finsler
metric has to be lightlike at least at a point.

\bigskip

The paper is organized as follows. Section 2 presents some preliminary
notions and results. Section 3 deals with the basic conformality notions and
examples of pseudo-Finslerian conformal maps. Section 4 is devoted to the
behavior of geodesics under conformal transformations. In Sections 5 and 6,
we discuss pseudo-Finslerian conformal vector fields.

\section{Pseudo-Finsler spaces. Finsler spacetimes}

Let $M$ be a $\mathcal{C}^{\mathcal{\infty }}$-smooth, connected manifold of
dimension $n$ and $(TM,\pi ,M),$ its tangent bundle. We denote by $%
(x^{i})_{i=\overline{0,n-1}}$ the coordinates of a point $x\in M$ in a local
chart $(U,\varphi )$ and consider local charts $(\pi ^{-1}(U),\Phi ),$ $\Phi
=(x^{i},y^{i})_{i=\overline{0,n-1}}$ on $TM$ induced by the choice of the
natural basis $\left\{ \partial _{i}\right\} $ in each tangent space. Commas 
$_{,i}$ will denote differentiation with respect to $x^{i}$ and dots $%
_{\cdot i},$ differentiation with respect to $y^{i}.$ The set of sections of
any fibered manifold $E$ over $M$ will be denoted by $\Gamma (E).$

\bigskip

Consider a non-empty open submanifold$\ $ $A\subset TM,$ with $\pi (A)=M$
and $0\not\in A.$ We assume that each $A_{x}:=T_{x}M\cap A,$ $x\in M,$ is a
positive conic set, i.e., $\forall \alpha >0,$ $\forall y\in A_{x}:\alpha
y\in A_{x}.$ Then the triple $(A,\pi _{|A},M),$ where $\pi _{|A}$ is the
restriction of $\pi $ to $A,$ is a fibered manifold over $M.$ For $x\in M$,
elements $y\in A_{x}$ are called \textit{admissible vectors }at $x.$

\begin{definition}
(\cite{Bejancu}): Fix a natural number $0\leq q<n.$ A smooth function $%
L:A\rightarrow \mathbb{R}$ defines a \textit{pseudo-Finsler structure }$%
(M,A,L)$ on $M$ if, at any point $(x,y)\in A$ and in any local chart $(\pi
^{-1}(U),\Phi )$ around $(x,y):$

1)$\ L(x,\alpha y)=\alpha ^{2}L(x,y),$ $\forall \alpha >0$;

2) the matrix\ $g_{ij}(x,y)=\dfrac{1}{2}\dfrac{\partial ^{2}L}{\partial
y^{i}\partial y^{j}}(x,y)$ has $q$ negative and $n-q$ positive eigenvalues.
\end{definition}

The \textit{Finsler Lagrangian (Finslerian energy) }$L$ can always be
prolonged by continuity to the closure $\bar{A}.$ In particular, we can set $%
L(x,0)=0$.

\bigskip

\textbf{Particular cases.}

1)\ If $q=0,$ then the Finsler structure $(M,A,L)$ is called \textit{%
positive definite.} If $A=TM\backslash \{0\},$ then it is called \textit{%
smooth. }A smooth and positive definite pseudo-Finsler structure is a 
\textit{Finsler structure}.

2)\ A pseudo-Finsler space $(M,A,L)$ with $q=n-1,$ is called a \textit{%
Lorentz-Finsler space} or a \textit{Finsler spacetime}.

In a Finsler spacetime, $ds^{2}=L(x,dx)$ is interpreted as spacetime
interval - and it allows the introduction of the basic causality notions.
For any point $x\in M,$ an admissible vector $y\in A_{x}$\ will be called:\ 
\textit{timelike}, if $L(x,y)>0,$ \textit{spacelike}, if $L(x,y)<0$ and 
\textit{null }or \textit{lightlike,} if $L(x,y)=0.$ Accordingly, a curve $%
c:[a,b]\rightarrow M,$ $t\mapsto c(t)$ is called timelike (respectively,
null, spacelike) if its tangent vector $\dot{c}$ is everywhere timelike
(respectively, null, spacelike)\footnote{%
This terminology will be actually used not only in Finsler spacetimes, but
in pseudo-Finsler spaces of arbitrary signature.}.

3)\ A pseudo-Finsler space $(M,A,L)$ is (\textit{pseudo)-Riemannian}, if, in
any local chart, $g_{ij}=g_{ij}(x)$ and \textit{flat (locally Minkowski) }if
around any point of $A,$ there exists a local chart in which $%
g_{ij}=g_{ij}(y)$ only.

A curve on $M$ is called \textit{admissible} if its tangent vector is
everywhere admissible. In the following, we will assume that all the curves
under discussion are admissible. The arc length of a curve $c:t\in \lbrack
a,b]\mapsto (x^{i}(t))$ on $M$ is calculated as $l(c)=\underset{a}{\overset{b%
}{\int }}F(x(t),\dot{x}(t))dt,$ where the \textit{Finslerian norm} $%
F:A\rightarrow \mathbb{R}$ is defined as: $F=\sqrt{\left\vert L\right\vert }.
$

The correspondence $\left( x,y\right) \mapsto g_{(x,y)}$, where%
\begin{equation}
g_{(x,y)}=g_{ij}(x,y)dx^{i}\otimes dx^{j}  \label{metric_tensor}
\end{equation}%
defines a mapping $g:A\rightarrow T_{2}^{0}M,$ (where $T_{2}^{0}M=T^{\ast
}M\otimes T^{\ast }M$), called the \textit{pseudo-Finslerian metric tensor}
attached to $L.$ A pseudo-Finsler metric $g$ can thus be regarded as a
section of the pullback bundle $\pi _{|A}^{\ast }(T_{2}^{0}M).$

In any local chart $(\pi ^{-1}(U),\Phi )$, there hold the equalities:%
\begin{equation}
L_{\cdot i}=2y_{i},~~\ y_{i\cdot j}=g_{ij},  \label{L_i}
\end{equation}%
where $y_{i}=g_{ij}y^{j}.$

On $A^{o}:=\{(x,y)\in A~|~L(x,y)\not=0\},$ it makes sense the \textit{%
angular metric}%
\begin{equation}
h=g-\dfrac{1}{4L}p\otimes p:A^{o}\rightarrow T_{2}^{0}M
\label{angular metric global}
\end{equation}%
where $p:=\dfrac{\partial L}{\partial y^{i}}dx^{i}.$ Using (\ref{L_i}), this
is written locally as:%
\begin{equation}
h=h_{ij}dx^{i}\otimes dx^{j},~\ \ \ \ \ h_{ij}=g_{ij}-\dfrac{y_{i}y_{j}}{L}.
\label{angular metric def}
\end{equation}%
The functions $h_{ij}$ and their contravariant versions $%
h^{ij}=g^{ik}g^{jl}h_{kl}$ obey: 
\begin{equation}
h_{ij}y^{i}=0,~\ \ h^{ij}y_{i}=0.  \label{angular metric prop}
\end{equation}

\bigskip 

Geodesics of $(M,A,L)$ are described (e.g., \cite{Anto}, \cite{Bucataru}),
by the equations:%
\begin{equation}
\dfrac{d^{2}x^{i}}{dt^{2}}+2G^{i}(x,\dot{x})=0,  \label{geodesic_eqn}
\end{equation}%
where the geodesic coefficients%
\begin{equation}
2G^{i}(x,y)=\dfrac{1}{2}g^{ih}(L_{\cdot h,j}y^{j}-L_{,h})
\label{geodesic_spray}
\end{equation}%
are defined for $\left( x,y\right) \in A.$ The \textit{canonical nonlinear
connection\ }$N$ will be understood as a connection on the fibered manifold $%
A$, in the sense of \cite{Sardanashvili}, pp. 30-32, i.e., as a splitting%
\begin{equation*}
TA=HA\oplus VA,
\end{equation*}%
where $VA=\ker d\pi _{|A}$ is called the vertical subbundle and $HA,$ the
horizontal subbundle of the tangent bundle $(TA,\pi _{|A},A)$. The local
adapted basis will be denoted by $(\delta _{i},\dot{\partial}_{i}),$ where $%
\delta _{i}:=\dfrac{\partial }{\partial x^{i}}-G_{~i}^{j}\dfrac{\partial }{%
\partial y^{j}},$ $\dot{\partial}_{i}=\dfrac{\partial }{\partial y^{j}}$ and
its dual basis, by $(dx^{i},\delta y=dy^{i}+G_{~j}^{i}dx^{j}),$ where%
\begin{equation}
G_{~j}^{i}=G_{~\cdot j}^{i}.  \label{nonlinear_conn_coeffs}
\end{equation}%
Every vector field $X\in \mathcal{X}(M)$ can thus be uniquely decomposed as $%
X=hX+vX,$ where $hX:=X^{i}\delta _{i}\in \Gamma (HA)$ and $vX:=Y^{i}\dot{%
\partial}_{i}\in \Gamma (VA).$

By $^{h}:\Gamma (A)\rightarrow \Gamma (HA),$ $v=v^{i}\partial _{i}\mapsto
v^{h}:=v^{i}\delta _{i}$ and $^{v}:\Gamma (A)\rightarrow \Gamma (VA),$ $%
v=v^{i}\partial _{i}\mapsto v^{v}:=v^{i}\dot{\partial}_{i},$ we will mean
the corresponding horizontal and vertical lifts of vector fields.

\bigskip

The \textit{dynamical covariant derivative}, \cite{Bucataru}, p. 34,
determined by the canonical nonlinear connection $N$ becomes, in a
pseudo-Finsler space $(M,A,L)$, a mapping $\nabla :\Gamma (VA)\rightarrow
\Gamma (VA),$ $X\mapsto \nabla X$ on the vertical subbundle $VA;$ it is
given in any local chart by:%
\begin{equation}
\nabla X_{(x,y)}:=(S(X^{j})+G_{~j}^{i}X^{j})_{(x,y)}\dot{\partial}_{i},~\ \
\forall (x,y)\in A,  \label{dynamical covariant derivative}
\end{equation}%
where $X=X^{i}\dot{\partial}_{i}$ and $S:=y^{k}\delta _{k}.$ The operator $%
\nabla $ acts on functions $f:TM\rightarrow \mathbb{R}$ as: $\nabla f=S(f)$,
it is additive and obeys the Leibniz rule with respect to multiplication
with functions.

The complete lift $\xi ^{\mathbf{c}}=\xi ^{i}\partial _{i}+\xi _{,j}^{i}y^{j}%
\dot{\partial}_{i}$ of an admissible vector field $\xi \in \Gamma (A)$ can
be expressed in terms of $\nabla $ as:%
\begin{equation}
\xi ^{\mathbf{c}}=\xi ^{h}+\nabla (\xi ^{v}).  \label{complete_lift_nabla}
\end{equation}%
From the 2-homogeneity in $y$ of the geodesic coefficients $2G^{i},$ it
follows that, along geodesics $c:[a,b]\rightarrow M,$ $t\mapsto (x^{i}(t))$
of $(M,A,L),$ we have, \cite{Bucataru}, p. 108: $\nabla \dot{x}^{i}=0;$
equivalently,%
\begin{equation}
(\nabla \dot{c}^{v})_{(c(t),\dot{c}(t))}=0.  \label{geodesic_eqn_nabla}
\end{equation}

The canonical nonlinear connection $N$ is metrical, that is, for the
vertical lift $g^{v}=g_{ij}\delta y^{i}\otimes \delta y^{j}:\Gamma
(VA)\times \Gamma (VA)\rightarrow \mathbb{R}$ of the metric $g,$ there holds
(\cite{Bucataru}, p. 98), at any $(x,y)\in A:$ 
\begin{equation}
\nabla g^{v}=0,  \label{metricity_nabla}
\end{equation}%
where $(\nabla g^{v})(X,Y)=\nabla (g^{v}(X,Y))-g^{v}(\nabla
X,Y)-g^{v}(X,\nabla Y),$ $\forall X,Y\in \Gamma (VA).$

Another known property which will be used in the following is that $L$ is
constant along horizontal curves, \cite{Crampin}, that is, 
\begin{equation}
X(L)=0,~\ \ \forall X\in \Gamma (HA).  \label{horizontal_derivs_L}
\end{equation}

\section{Basic notions and examples}

\subsection{\label{basic notions}Conformal maps and conformal vector fields}

The notion of conformal map between Finsler spaces is extended in a
straightforward way to pseudo-Finsler spaces; we have to just take care to
the domains of definition of the involved metric tensors.

\begin{definition}
A diffeomorphism $f:M\rightarrow M^{\prime }$ is called a \textit{conformal
map} between two pseudo-Finsler spaces $(M,A,L)$ and $\left( M^{\prime
},A^{\prime },L^{\prime }\right) $ if there exists a function $\sigma
:M\rightarrow \mathbb{R}$ such that:%
\begin{equation}
L^{\prime }\circ df_{|A}=e^{\sigma }L.  \label{conformal_def_L}
\end{equation}
\end{definition}

In Finsler spacetimes, conformal maps preserve the light cones $L=0.$ For
positive definite Finsler spaces, transformations (\ref{conformal_def_L})
coincide with angle-preserving transformations, \cite{Anto}.

A conformal map is a \textit{similarity }if $\sigma =const.$ and an \textit{%
isometry }if $\sigma =1.$

\bigskip

Denoting by $\tilde{A}:=A\cap \left( df^{-1}\right) (A^{\prime })$ the set
where (\ref{conformal_def_L}) makes sense, (\ref{conformal_def_L}) reads:%
\begin{equation}
L^{\prime }(f(x),df_{x}(y))=e^{\sigma (x)}L(x,y),~\ \ \forall (x,y)\in 
\tilde{A}.  \label{conformal_explicit}
\end{equation}

\bigskip

\textbf{Convention.}\textit{\ }In the following, we will assume that $\pi (%
\tilde{A})=M$ (in particular, this implies that $\tilde{A}$ is a fibered
manifold over $A$). Under this assumption, there will be no loss of
generality if we consider that $A^{\prime }=\left( df\right) (A);$ in the
contrary case, we will restrict our discussion to the sets $\tilde{A}$ and $%
\left( df\right) (\tilde{A})=A^{\prime }\cap df(A)$ respectively and
re-denote them by $A$ and $A^{\prime }$.We will denote the restriction $%
df_{|A}:A\rightarrow A^{\prime }$ simply by $df.$

\bigskip

With the notation%
\begin{equation}
\tilde{L}:=L^{\prime }\circ df,  \label{L_tilde_notation}
\end{equation}%
and with the above convention, (\ref{conformal_def_L}) becomes:%
\begin{equation}
\tilde{L}(x,y)=e^{\sigma }L(x,y),~~\forall (x,y)\in A;  \label{L_tilde}
\end{equation}%
this is equivalent to:%
\begin{equation}
\tilde{g}(x,y)=e^{\sigma }g(x,y),~~\forall (x,y)\in A.  \label{g_tilde_g}
\end{equation}

\bigskip

Assume that $f:M\rightarrow M^{\prime }$ is given with respect to two
arbitrary local charts on $M$ and $M^{\prime }$ as: $\tilde{x}^{i}=\tilde{x}%
^{i}(x^{j})$; the differential $df:A\rightarrow A^{\prime },\left(
x,y\right) \mapsto (\tilde{x},\tilde{y})$ is locally expressed as: $\tilde{x}%
^{i}=\tilde{x}^{i}(x^{j}),~\tilde{y}^{i}=\dfrac{\partial \tilde{x}^{i}}{%
\partial x^{j}}y^{j},$ therefore, differentiating (\ref{L_tilde_notation})
twice with respect to $y^{i},$ we find:%
\begin{equation}
\tilde{g}_{ij}(x,y)=\dfrac{\partial \tilde{x}^{k}}{\partial x^{i}}\dfrac{%
\partial \tilde{x}^{l}}{\partial x^{j}}g_{kl}^{\prime }(\tilde{x},\tilde{y}%
),~\ \ \forall (x,y)\in A.  \label{g_tilde_local}
\end{equation}%
In coordinate-free writing, this is:%
\begin{equation}
\tilde{g}=T_{2}^{0}f\circ g^{\prime }\circ df,  \label{g_tilde_global}
\end{equation}%
where $T_{2}^{0}f:T_{2}^{0}M^{\prime }\rightarrow T_{2}^{0}M$ is the mapping
naturally induced by $f$ on the respective tensor powers (giving the
multiplication by the Jacobian matrix of $f$ in (\ref{g_tilde_local})); we
will write this also as:%
\begin{equation}
\tilde{g}:=\left( df\right) ^{\ast }g^{\prime }.  \label{pullback_notation}
\end{equation}

\bigskip

On a pseudo-Finsler space $(M,A,L)$, an admissible vector field $\xi \in
\Gamma (A)$ is called \textit{conformal} if its 1-parameter group $\left\{
\varphi _{\varepsilon }\right\} _{\varepsilon \in I}$ consists of conformal
transformations, i.e., for any $\varepsilon \in I:$ 
\begin{equation}
L\circ d\varphi _{\varepsilon }=e^{\sigma _{\varepsilon }}L,
\label{1-parameter group}
\end{equation}%
where $\sigma _{\varepsilon }:M\rightarrow M$ are smooth functions.

Assume that $\xi \in \Gamma (A)$ is a conformal vector field. Since $%
d\varphi _{\varepsilon }$ is generated by the complete lift $\xi ^{\mathbf{c}%
},$ we get, by differentiating (\ref{1-parameter group}) at $\varepsilon =0$:%
\begin{equation}
\mathcal{L}_{\xi ^{\mathbf{c}}}L=\dfrac{d}{d\varepsilon }|_{\varepsilon
=0}(e^{\sigma _{\varepsilon }}L)=\mu L,  \label{Lie_deriv_L}
\end{equation}%
where $\mu :=\dfrac{d\sigma _{\varepsilon }}{d\varepsilon }|_{\varepsilon
=0}.$

In particular, if $\sigma _{\varepsilon }=1$ for all $\varepsilon ,$ i.e., $%
\xi $ is a \textit{Killing vector field} for $L$, then:\ $\mathcal{L}_{\xi ^{%
\mathbf{c}}}L=0.$

\bigskip

\textbf{Examples. }If $L=L(y):T\mathbb{R}^{n}\rightarrow \mathbb{R}$ is
locally Minkowski, then:

1)\ The \textit{radial vector field} $\xi (x)=x^{i}\partial _{i}$ is a
conformal vector field. This can be checked easily, as $\xi ^{\mathbf{c}%
}=x^{i}\partial _{i}+x_{,j}^{i}y^{j}\dot{\partial}_{i}=x^{i}\partial
_{i}+y^{i}\dot{\partial}_{i}$ and, using the homogeneity of degree 2 of $L,$
we obtain: 
\begin{equation*}
\mathcal{L}_{\xi ^{\mathbf{c}}}L=x^{i}L_{,i}+y^{i}L_{\cdot i}=0+2L=2L.
\end{equation*}%
The flow of $\xi $ consists of the \textit{dilations (homotheties)} $\varphi
_{\varepsilon }:\left( x^{i}\right) \mapsto (e^{\varepsilon }x^{i}).$

2) Any \textit{constant vector field} $\xi _{0}$ is a Killing vector field
for $L=L(y).$ This follows from: $\xi _{0}^{\mathbf{c}}=\xi _{0}^{i}\partial
_{i}$ and: 
\begin{equation*}
\mathcal{L}_{\xi _{0}^{\mathbf{c}}}L=\xi _{0}^{i}L_{,i}=0.
\end{equation*}%
The vector field $\xi _{0}$ generates the translations $\left( x^{i}\right)
\mapsto (x^{i}+\varepsilon \xi _{0}^{i})$.

\subsection{\label{BM_example} Conformal maps between locally Minkowski
spaces}

In Euclidean spaces, Liouville's Theorem states that any conformal
transformation relating two domains of $\mathbb{R}^{n},$ $n>2,$ is a
similarity or the composition between a similarity and an inversion; passing
to pseudo-Euclidean spaces, one has to only add to the picture, \cite%
{Haantjes}, compositions of two inversions.

In Finsler spaces, the situation is even more rigid; it was proven in \cite%
{Matveev} that any conformal map between two non-Euclidean locally Minkowski
Finsler spaces is a similarity. Taking all these into account, one could
reasonably expect that conformal groups of pseudo-Finsler spaces could not
be too rich.

Still, as we will show in the following, one can create whole \textit{%
families} of pseudo-Finsler metrics with conformal symmetries which are not
only non-similarities, but they depend on arbitrary functions. This gives an
affirmative answer, in indefinite signature, to an old and famous question
raised by M. Matsumoto, \cite{Matveev}, namely \textit{whether there exist
two locally Minkowski structures which are conformal to each other.}

\bigskip

For $\dim M=4,$ a first example is actually known from \cite{Pavlov}. This
example can be extended to any dimension, as follows.

\textbf{Example 1: }\textit{Conformal symmetries of Berwald-Moor metrics. }%
Consider, on $M=\mathbb{R}^{n},$ $n>1:$ 
\begin{equation*}
A=\left\{ (x^{i},y^{i})_{i=\overline{0,n-1}}~|~y^{0}y^{1}....y^{n-1}\not=0%
\right\} \subset TM\backslash \{0\}
\end{equation*}%
and the $n$-dimensional Berwald-Moor pseudo-Finsler function (\cite%
{Matsumoto}, pp. 155-156) on $A:$%
\begin{equation}
L(y)=\varepsilon \left\vert y^{0}y^{1}....y^{n-1}\right\vert ^{\tfrac{2}{n}},
\label{BM_Lagrangian}
\end{equation}%
where\footnote{%
The sign $\varepsilon $ is meant to ensure the existence of spacelike
vectors. Our $L$ is, up to a sign, the square of the one in \cite{Matsumoto}
(the latter would be, in our notations, $F$).} $\varepsilon
:=sign(y^{0}y^{1}....y^{n-1})$.

\bigskip

For an arbitrary diffeomorphism of the form%
\begin{equation}
f:\mathbb{R}^{n}\rightarrow \mathbb{R}^{n},~\ \ x=\left(
x^{0},x^{1},...,x^{n-1}\right) \mapsto
(f^{0}(x^{0}),f^{1}(x^{1}),...,f^{n-1}(x^{n-1})),
\label{conformal transf BM}
\end{equation}%
the Jacobian determinant $J(x):=\dfrac{df^{0}}{dx^{0}}\dfrac{df^{1}}{dx^{2}}%
....\dfrac{df^{n-1}}{dx^{n-1}}$ is always nonzero, hence, there is no loss
of generality if we assume that $J(x)>0$, $\forall x\in \mathbb{R}^{n}.$ We
find: 
\begin{equation}
\tilde{L}(y)=L(df(y))=J(x)^{\tfrac{2}{n}}L(y),~\ \forall y\in A,
\label{transformed _BM metric}
\end{equation}%
i.e., $\tilde{L}$ is also defined on $A.$ Moreover, $f$ is a conformal map,
with conformal factor:%
\begin{equation}
\sigma (x)=\dfrac{2}{n}\ln J(x).  \label{conformal factor BM}
\end{equation}%
The Finsler function (\ref{transformed _BM metric}) is locally Minkowski;
more precisely, the coordinate transformation on $\pi ^{-1}(U)$ induced by: $%
(x^{i})=f^{-1}(x^{i^{\prime }})$ brings $\tilde{L}$ to the form $\tilde{L}%
(y)=(y^{0^{\prime }}y^{1^{\prime }}...y^{n-1^{\prime }})^{\tfrac{2}{n}}.$
Yet, $\sigma (x)$ is not only non-constant, but it depends on $n$ arbitrary
functions.

\bigskip

Berwald-Moor metrics are not the only such examples. Here is a much more
general class of flat pseudo-Finsler metrics on $\mathbb{R}^{n},$ $n\geq 2,$
which admit nontrivial conformal symmetries.

\textbf{Example 2: }\textit{Weighted product Finsler functions. }Consider $M=%
\mathbb{R}^{k}\times \mathbb{R}^{n-k}$ and a pseudo-Finsler metric function $%
L:A\rightarrow \mathbb{R}$ (with $A\subset A_{1}\times A_{2},$ $A_{1}\subset
T\mathbb{R}^{k},$ $A_{2}\subset T\mathbb{R}^{n-k}$), of the form:%
\begin{equation}
L=L_{1}^{\alpha }L_{2}^{1-\alpha },  \label{weighted direct product}
\end{equation}%
where $L_{1}:A_{1}\rightarrow \mathbb{R}$ and $L_{2}:A_{2}\rightarrow 
\mathbb{R}$ are pseudo-Finsler functions and $\alpha \in (0,1).$

Assume that $f_{1}:\mathbb{R}^{k}\rightarrow \mathbb{R}^{k},$ $\left(
x^{0},...,x^{k-1}\right) \mapsto (\tilde{x}^{0},...,\tilde{x}^{k-1})$ is a
conformal transformation with non-constant factor $\sigma =\sigma (x),$ such
that $\tilde{L}_{1}=L_{1}\circ df$ is locally Minkowski - and let $L_{2}$ be
completely arbitrary. Then, the transformation 
\begin{equation}
f:\mathbb{R}^{n}\rightarrow \mathbb{R}^{n},~f:=(f_{1},id_{\mathbb{R}^{n-k}})
\label{conformal_transf_product}
\end{equation}%
leads to: $\tilde{L}(y):=L(df(y))=e^{\alpha \sigma (x)}L(y),$ $\forall
y=(y^{0},...,y^{k-1},y^{k},..,y^{n-1})\in \mathbb{R}^{n},$ i.e., $f$ is a
conformal symmetry (which is not\textit{\ }a similarity) of $L.$ The
obtained Finsler function $\tilde{L}$ is obviously locally Minkowski - and
it depends on the choice of the function $f_{1}.$

\bigskip

\textbf{Particular cases} for the choice of $L_{1}$ in (\ref{weighted direct
product}) include:

a) \textit{The case} $k=1.$ In this case, $L_{1}=\lambda (y^{0})^{2},$ for
some $\lambda \in \mathbb{R}$ and therefore, \textit{any} diffeomorphism%
\begin{equation*}
f_{1}:\mathbb{R\rightarrow R},x^{0}\mapsto f_{1}(x^{0}),
\end{equation*}%
serves the purpose, since: $L_{1}(df_{1}(y^{0}))=L_{1}(\dot{f}%
_{1}(x^{0})y^{0})=\dot{f}_{1}^{2}(x^{0})L_{1}(y^{0}).$

b) \textit{The }$k$\textit{-dimensional Minkowski metric:}%
\begin{equation*}
L_{1}(y^{0},...,y^{k-1})=\left( y^{0}\right) ^{2}-\left( y^{1}\right)
^{2}-....-(y^{n})^{2};
\end{equation*}%
then, $f_{1}$ can be, e.g., an inversion.

c) \textit{The }$k$\textit{-dimensional Berwald-Moor metric} can also be
chosen as $L_{1}$. In this case, $f_{1}$ can be any mapping of the form (\ref%
{conformal transf BM}).

\section{Behavior of geodesics under conformal maps}

\textbf{1. Projective-and-conformal mappings. }A diffeomorphism $%
f:M\rightarrow M$ between is called a \textit{projective map }if geodesics
of $L$ coincide, up to re-parametrization, with geodesics of $\tilde{L}%
:=L\circ df.$ In a completely similar manner to the positive definite case (%
\cite{Anto}, pp. 110-111), it follows that the mapping $f$ is projective if
and only if there exists a 1-homogeneous scalar function $P:A\rightarrow 
\mathbb{R}$ such that, in any local chart, 
\begin{equation}
2\tilde{G}^{i}\left( x,y\right) =2G^{i}\left( x,y\right) +P\left( x,y\right)
y^{i},~\ \forall \left( x,y\right) \in A.  \label{projective_rel_G}
\end{equation}

Assume that the projective map $f$ is also conformal, with conformal factor $%
e^{\sigma }.$ Then, a direct calculation using (\ref{geodesic_spray}) shows
that:%
\begin{equation*}
2\tilde{G}^{i}=2G^{i}+\dfrac{1}{2}g^{ih}\left( \sigma _{,k}y^{k}L_{\cdot
h}-\sigma _{,h}L\right) ;
\end{equation*}%
using (\ref{L_i}), this is: 
\begin{equation}
2\tilde{G}^{i}=2G^{i}+\sigma _{,k}y^{k}y^{i}-\dfrac{1}{2}g^{ih}\sigma _{,h}L.
\label{rel_spray_coefficients}
\end{equation}

\bigskip

Based on the properties of the angular metric tensor (\ref{angular metric
global}), we can extend to arbitrary signature a result known in positive
definite Finsler spaces from \cite{Cheng}, \cite{Szilasi}:

\begin{theorem}
If a mapping $f:M\rightarrow M,$ relating two pseudo-Finsler structures on a
manifold $M$ with $\dim M\geq 2$ is both conformal and projective, then $f$
is a similarity.
\end{theorem}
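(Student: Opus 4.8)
The plan is to start from equation \eqref{rel_spray_coefficients}, which already expresses the relation between the geodesic sprays of the two conformally related structures under the extra assumption that $f$ is also projective, and to compare it with the defining relation \eqref{projective_rel_G} for a projective map. Subtracting, one gets
\begin{equation*}
P(x,y)\,y^{i}=\sigma_{,k}y^{k}y^{i}-\tfrac{1}{2}g^{ih}\sigma_{,h}L,
\end{equation*}
valid for all $(x,y)\in A$. The whole argument will then consist in extracting as much as possible from this single vector identity.

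First I would lower the index $i$ with $g_{ij}$, turning the identity into
\begin{equation*}
\bigl(P-\sigma_{,k}y^{k}\bigr)y_{j}=-\tfrac{1}{2}\sigma_{,j}L,
\end{equation*}
using $g_{ij}y^{i}=y_{j}$. This is the key relation. Now I would work on the open set $A^{o}$ where $L\neq0$, so that the angular metric $h_{ij}=g_{ij}-y_iy_j/L$ of \eqref{angular metric def} is available. The idea is that the right-hand side, $-\tfrac12\sigma_{,j}L$, depends on $y$ only through the overall factor $L$ (since $\sigma=\sigma(x)$), whereas the left-hand side is proportional to $y_{j}$; contracting with $h^{ij}$ (which annihilates $y_j$ by \eqref{angular metric prop}) kills the left side and forces $h^{ij}\sigma_{,j}=0$. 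Since $h_{ij}$ restricted to the "screen" distribution orthogonal to $y$ is nondegenerate (it has rank $n-1$ at each point of $A^o$, as $g$ is nondegenerate and $y$ is not null there), and as $y$ ranges over an open conic set the orthogonal complements sweep out — I would argue pointwise that the only covector killed by all these $h^{ij}$ is $\sigma_{,j}=0$ at that point; hence $\sigma_{,j}=0$, i.e. $\sigma$ is locally constant, wherever $A^o_x\neq\emptyset$.

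The remaining point, and the place I expect the only real friction, is promoting "$\sigma$ constant on the part of $M$ seen by $A^o$" to "$\sigma$ constant on all of $M$". I would handle this by noting that $A^o$ is dense in $A$: the set $\{L=0\}$ is a closed conic hypersurface-type subset in each fiber (it is the zero set of the continuous $2$-homogeneous $L$, and since the fiber metric $g$ is indefinite of rank $n$ with at least one positive and — in the genuinely pseudo case — at least one negative eigenvalue, $L$ is not identically zero on any nonempty open conic set, so $A\setminus A^o$ has empty interior). Thus $\pi(A^o)$ is dense in $\pi(\tilde A)=M$, and by continuity of $x\mapsto\sigma(x)$ together with connectedness of $M$, $\sigma_{,j}\equiv0$ everywhere, so $\sigma$ is a constant and $f$ is a similarity. (In the degenerate edge case $\dim M\geq2$ but the fibers are such that $A^o$ could be empty — which cannot happen for an honest pseudo-Finsler metric of the stated type — one would instead differentiate the lowered identity with respect to $y^k$ and use $y_{j\cdot k}=g_{jk}$ from \eqref{L_i} to get $P_{\cdot k}y_j+g_{jk}(P-\sigma_{,\ell}y^\ell)+y_j(-\sigma_{,k}+\cdots)=-\sigma_{,j}y_k$, then symmetrize/antisymmetrize in $j,k$; the antisymmetric part gives $P_{\cdot k}y_j-P_{\cdot j}y_k=\sigma_{,j}y_k-\sigma_{,k}y_j$ and contracting with $y^k$ and using $1$-homogeneity of $P$ recovers $\sigma_{,j}$ proportional to $y_j$, which combined with nondegeneracy again forces $\sigma_{,j}=0$ once $n\geq2$.) The hypothesis $\dim M\geq2$ enters precisely here: for $n=1$ the angular metric is identically zero and $h^{ij}$ carries no information, so the conclusion genuinely fails.
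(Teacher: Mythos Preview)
Your argument is correct and starts exactly as the paper does: you compare \eqref{projective_rel_G} with \eqref{rel_spray_coefficients}, and your lowered identity $(P-\sigma_{,k}y^{k})y_{j}=-\tfrac{1}{2}\sigma_{,j}L$ is equivalent (contract with $g^{ij}$) to the paper's relation $L\sigma_{,j}-\sigma_{,k}y^{k}y_{j}=0$ once one eliminates $P$. The divergence is only in the final extraction of $\sigma_{,j}=0$. You argue geometrically: $h^{ij}\sigma_{,j}=0$ forces the $y$-independent covector $\sigma_{,j}$ to be proportional to $y_{j}=\tfrac{1}{2}L_{\cdot j}$ for every $y\in A^{o}_{x}$; since $y\mapsto y_{j}$ has nondegenerate differential $g_{jk}$, its image cannot sit on a single line when $n\geq 2$, so $\sigma_{,j}=0$. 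The paper instead differentiates $L\sigma_{,j}-\sigma_{,k}y^{k}y_{j}=0$ with respect to $y^{i}$ and contracts with $h^{ij}$, using $h^{ij}g_{ij}=n-1$ to obtain $(n-1)\sigma_{,k}y^{k}=0$, hence $\sigma_{,k}=0$. Your route is cleaner conceptually; the paper's is more self-contained computationally and makes the role of the hypothesis $n\geq 2$ visible as the factor $n-1$.

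Two minor remarks. Your density discussion for $A^{o}$ is fine and in fact more careful than the paper, which simply picks an open region of $A_{x}$ with $L\neq 0$ and leaves its existence implicit. Your parenthetical ``edge case'' computation, however, is not needed (since $A^{o}$ is always dense) and the antisymmetrization you sketch is off by a sign and a factor of $2$, and the contraction with $y^{k}$ still leaves a $P_{\cdot j}L$ term that does not immediately yield $\sigma_{,j}\propto y_{j}$; you would have to feed back the original identity to close it. Since that branch is never actually invoked, this does not affect the validity of your main proof.
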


\begin{proof}
Denote by $(M,A,L)$ and $(M,A,\tilde{L})$ the two Finsler structures; that
is, $\tilde{L}=L\circ df$. As $f$ is both conformal and projective,
equalities (\ref{rel_spray_coefficients}) and (\ref{projective_rel_G}) are
both satisfied. Therefore, at any $\left( x,y\right) \in A$ and in any local
chart around $(x,y),$%
\begin{equation}
\sigma _{,k}y^{k}y^{i}-\dfrac{1}{2}g^{ik}\sigma _{,k}L=Py^{i}.  \label{aux0}
\end{equation}%
Now, fix an arbitrary $x\in M$ and an arbitrary open region of $A_{x}$ where 
$L\not=0$; on such a region, it makes sense the angular metric tensor (\ref%
{angular metric global}). Contracting (\ref{aux0}) with $h_{ij}$ and using (%
\ref{angular metric prop}), it remains: $h_{ij}g^{ik}\sigma _{,k}L=0.$
Taking into account that $h_{ij}g^{ik}=\delta _{j}^{k}-\dfrac{y^{k}y_{j}}{L},
$ this becomes:%
\begin{equation}
L\sigma _{,j}-\sigma _{,k}y^{k}y_{j}=0.  \label{aux1}
\end{equation}%
Differentiating with respect to $y^{i},$ we find, by (\ref{L_i}):%
\begin{equation*}
2y_{i}\sigma _{,j}-\sigma _{,i}y_{j}-\sigma _{,k}y^{k}g_{ij}=0.
\end{equation*}%
Now, contract both hand sides \ of the above equality with $h^{ij}.$ Using
again (\ref{angular metric prop}), we get rid of the first and of the second
term. Further, noticing that $h^{ij}g_{ij}=n-1,$ we obtain: $\left(
n-1\right) \sigma _{,k}y^{k}=0.$ But, by hypothesis, $n=\dim M\geq 2$,
therefore:%
\begin{equation*}
\sigma _{,h}y^{h}=0,
\end{equation*}%
which, by differentiation with respect to $y^{k},$ gives that: $\sigma
_{,k}(x)=0.$ As the point $x$ was arbitrarily chosen, we obtain $\sigma
(x)=const.$, q.e.d.
\end{proof}

\bigskip

\textbf{Remark.}\ Substituting $\sigma =const.$ into (\ref{aux0}), we obtain 
$P=0.$ That is, if two pseudo-Finsler metrics $L$ and $\tilde{L}$ are both
conformally and projectively related, then, $2\tilde{G}^{i}=2G^{i}$ -
meaning that their parametrized geodesics coincide.

\bigskip

\textbf{2. Conformal changes and null geodesics. }Generally, conformal maps
do not preserve geodesics. Still, for null geodesics, we can extend a
remarkable result from the semi-Riemannian case :

\begin{proposition}
Null geodesics of two conformally related pseudo-Finsler metrics coincide up
to parametrization.
\end{proposition}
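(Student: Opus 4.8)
The plan is to read everything off from the spray identity (\ref{rel_spray_coefficients}). Although that identity was displayed above for a map that is simultaneously projective and conformal, its derivation uses conformality only, so it holds for \emph{any} conformal $f$: at every $(x,y)\in A$,
\begin{equation*}
2\tilde G^{i}(x,y)=2G^{i}(x,y)+\sigma_{,k}y^{k}\,y^{i}-\tfrac12 g^{ih}\sigma_{,h}\,L(x,y).
\end{equation*}
The key remark is that the only term here which is not of the ``projective'' shape (\ref{projective_rel_G}) is the last one, and it carries a factor $L$; hence on the null set $\{L=0\}$ it drops out and we are left with $2\tilde G^{i}=2G^{i}+P\,y^{i}$, where $P(x,y):=\sigma_{,k}y^{k}$ is positively $1$-homogeneous in $y$. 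Thus, \emph{restricted to null directions}, $\tilde L$ and $L$ are related exactly as in a projective change, and the classical fact that adding a multiple of $y^{i}$ to a spray changes geodesics only by a re-parametrization should apply — provided the whole computation can be run while staying on $\{L=0\}$.

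Concretely, I would fix a null geodesic $c:[a,b]\to M$ of $L$, so that $\ddot c^{i}+2G^{i}(c,\dot c)=0$ and $L(c(a),\dot c(a))=0$. Since the Finslerian energy $L$ is constant along geodesics (by $\nabla\dot c^{v}=0$, see (\ref{geodesic_eqn_nabla}), together with the metricity (\ref{metricity_nabla}) of $N$, using $g_{ij}y^iy^j=L$), we get $L(c(t),\dot c(t))\equiv 0$, so the reduced relation $2\tilde G^{i}(c,\dot c)=2G^{i}(c,\dot c)+(\sigma_{,k}\dot c^{k})\,\dot c^{i}$ holds along all of $c$. Next I would seek a re-parametrization $s\mapsto t(s)$ with $t'>0$ and set $\gamma(s):=c(t(s))$; with primes $=d/ds$ and dots $=d/dt$, one has $\gamma'=t'\dot c$, $\gamma''=(t')^{2}\ddot c+t''\dot c$, and, by the $2$-homogeneity of the geodesic coefficients, $2\tilde G^{i}(\gamma,\gamma')=(t')^{2}\,2\tilde G^{i}(c,\dot c)$. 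Plugging these together with $\ddot c^{i}=-2G^{i}(c,\dot c)$ into the geodesic equation $\gamma''^{i}+2\tilde G^{i}(\gamma,\gamma')=0$ of $\tilde L$, the $2G^{i}$-terms cancel and everything collapses to
\begin{equation*}
\dot c^{i}\big(t''+(t')^{2}\,\dot\phi\big)=0,\qquad \phi(t):=\sigma(c(t)).
\end{equation*}
As $0\notin A$, the admissible vector $\dot c$ is nowhere zero, so this is the scalar ODE $t''+(t')^{2}\dot\phi=0$; writing $u=dt/ds$ it becomes $du/dt=-u\,\dot\phi$, whence $dt/ds=C\,e^{-\phi(t)}$ with $C>0$ a constant. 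Since $\phi=\sigma\circ c$ is smooth on the compact interval, this integrates to a smooth, strictly increasing $s=s(t)$, i.e.\ a genuine re-parametrization; choosing $C>0$ also guarantees that $\gamma'=u\,\dot c$ stays admissible (conic property of $A$) and that the positive $2$-homogeneity invoked above was legitimate. Finally $\tilde L(\gamma,\gamma')=(t')^{2}e^{\sigma(c)}L(c,\dot c)=0$, so $\gamma$ is a \emph{null} geodesic of $\tilde L$. Exchanging the roles of $L$ and $\tilde L$ (replacing $\sigma$ by $-\sigma$) yields the reverse inclusion, so the null geodesics of the two metrics coincide up to parametrization.

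There is no deep obstacle here; the points requiring genuine care are precisely the three just flagged: (i) that $L$ is really conserved along the geodesic, so the non-projective term of (\ref{rel_spray_coefficients}) stays switched off along the whole curve; (ii) that one must fix the orientation of the new parameter ($C>0$), so that both the positive $2$-homogeneity of $2\tilde G^{i}$ and the conic admissibility of $\gamma'$ are respected; and (iii) that the re-parametrization ODE is globally solvable, which is immediate because $e^{-\sigma\circ c}$ is bounded between positive constants on $[a,b]$.
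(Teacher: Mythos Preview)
Your proof is correct and follows essentially the same approach as the paper: both use the spray identity (\ref{rel_spray_coefficients}) and observe that the $L$-term vanishes along null geodesics, leaving the projective-type relation $2\tilde G^{i}=2G^{i}+(\sigma_{,k}y^{k})y^{i}$. Your version simply fills in details the paper omits---explicitly verifying that $L$ stays zero along the whole geodesic, solving the re-parametrization ODE, and checking admissibility and the reverse inclusion---whereas the paper just cites the known fact that such a spray change preserves unparametrized geodesics.
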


\begin{proof}
Denote by $L$ and $\tilde{L}$ the two conformally related pseudo-Finsler
metrics. Taking into account that, along null geodesics, $L=0$ and
substituting into (\ref{rel_spray_coefficients}), we find that, along these
curves, $2\tilde{G}^{i}=2G^{i}+\sigma _{,k}y^{k}y^{i}.$ Setting $P:=\sigma
_{,k}y^{k}$, we get: $2\tilde{G}^{i}=2G^{i}+Py^{i},$ which means that null
geodesics of the two spaces coincide up to re-parametrization.
\end{proof}

\bigskip

Another result in pseudo-Riemannian geometry, \cite{Kuhnel survey}, which
can be extended to pseudo-Finsler spaces is:

\begin{proposition}
\label{conservation law}Let $\xi \in $ $\Gamma (A)$ be a conformal vector
field for a pseudo-Finsler space $(M,A,L).$ Along any lightlike geodesic $%
c:[a,b]\rightarrow M$, $t\mapsto c(t)$ the quantity $g_{(c(t),\dot{c}(t))}(%
\dot{c}(t),\xi (t))$ is conserved.
\end{proposition}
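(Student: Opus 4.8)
The plan is to prove the stronger pointwise statement that, along \emph{every} geodesic $c$ of $(M,A,L)$, the derivative of $Q(t):=g_{(c(t),\dot c(t))}(\dot c(t),\xi(t))$ equals $\tfrac12(\mathcal{L}_{\xi^{\mathbf c}}L)$ evaluated at $(c(t),\dot c(t))$; the proposition then follows at once by combining the conformality relation (\ref{Lie_deriv_L}) with the defining property $L(c,\dot c)\equiv 0$ of a lightlike geodesic.

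First I would put $Q$ into a form that is convenient to differentiate. Using $L_{\cdot i}=2y_i$ from (\ref{L_i}), along the canonical lift $\gamma(t)=(c(t),\dot c(t))$ one has $Q(t)=\tfrac12 L_{\cdot i}(c(t),\dot c(t))\,\xi^i(c(t))$. Differentiating in $t$, the term coming from $\xi^i(c)$ contributes $\tfrac12 L_{\cdot i}\,\xi^i_{,k}\dot c^k$. For the term coming from $L_{\cdot i}(c,\dot c)$, I would use the geodesic equation (\ref{geodesic_eqn}) to substitute $\ddot c^k=-2G^k$, the identity $L_{\cdot i\cdot k}=2g_{ik}$, and the explicit spray formula (\ref{geodesic_spray}) rewritten as $4g_{ik}G^k=L_{\cdot i,j}y^j-L_{,i}$; the two $x$-derivative terms then cancel, leaving the compact relation $\tfrac{d}{dt}\!\left(L_{\cdot i}(c,\dot c)\right)=L_{,i}(c,\dot c)$ --- the geodesic equation in Euler--Lagrange/Hamiltonian form. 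Putting the two contributions together gives $2\dot Q=\xi^i L_{,i}+\xi^i_{,k}\dot c^k\,L_{\cdot i}$, and the right-hand side is precisely $(\xi^{\mathbf c}L)(c,\dot c)=(\mathcal{L}_{\xi^{\mathbf c}}L)(c,\dot c)$ by the coordinate expression of the complete lift. Since $\xi$ is conformal, (\ref{Lie_deriv_L}) yields $\mathcal{L}_{\xi^{\mathbf c}}L=\mu L$ with $\mu=\mu(x)$ depending on the base point only, so $2\dot Q(t)=\mu(c(t))\,L(c(t),\dot c(t))$; along a lightlike geodesic this vanishes, and $Q$ is conserved.

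A more intrinsic variant, which never leaves the adapted frame, runs as follows: write $Q=g^v(\dot c^{\,v},\xi^v)$ along $\gamma$, noting that the velocity field $\dot c^{\,v}$ is the restriction to $\gamma$ of the Liouville field $y^i\dot\partial_i$; then metricity of the canonical nonlinear connection (\ref{metricity_nabla}) together with the geodesic identity $\nabla\dot c^{\,v}=0$ (\ref{geodesic_eqn_nabla}) gives $\dot Q=g^v(\dot c^{\,v},\nabla\xi^v)$; and using $\xi^{\mathbf c}=\xi^h+\nabla(\xi^v)$ from (\ref{complete_lift_nabla}) together with $\xi^h(L)=0$ (which holds because $\xi^h$ is horizontal, by (\ref{horizontal_derivs_L})), one identifies $\xi^{\mathbf c}(L)=(\nabla\xi^v)(L)=2\,g^v(\dot c^{\,v},\nabla\xi^v)=2\dot Q$ along $\gamma$, recovering the same conclusion.

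The main obstacle is the computation establishing the ``Hamiltonian'' form $\tfrac{d}{dt}L_{\cdot i}=L_{,i}$ of the geodesic equation from the given spray coefficients (\ref{geodesic_spray}) while keeping the index contractions straight --- equivalently, in the intrinsic route, verifying the identity $2\,g^v(y^i\dot\partial_i,\nabla\xi^v)=\xi^{\mathbf c}(L)$ for an arbitrary admissible $\xi$ and correctly identifying $\dot c^{\,v}|_\gamma$ with the Liouville field. Beyond that the argument is short, the one point worth flagging being that the conformal factors $\sigma_\varepsilon$, and hence $\mu$, are functions of $x$ alone, so that $\mu L$ really does vanish wherever $L$ does.
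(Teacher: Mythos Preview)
Your proposal is correct. In fact you give two proofs, and the second, ``intrinsic'' variant is essentially the paper's own argument: write $Q=g^{v}(\dot c^{\,v},\xi^{v})$, use metricity (\ref{metricity_nabla}) and $\nabla\dot c^{\,v}=0$ to reduce $\dot Q$ to $g^{v}(\dot c^{\,v},\nabla\xi^{v})$, then identify the latter with $\tfrac12(\nabla\xi^{v})(L)=\tfrac12\,\xi^{\mathbf c}(L)$ via (\ref{complete_lift_nabla}) and (\ref{horizontal_derivs_L}); the paper only reorders the logic slightly, invoking the lightlike hypothesis before rather than after this identification.

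Your first, coordinate route is a genuinely different and somewhat more elementary alternative. Instead of the dynamical covariant derivative and the adapted frame, you recast the geodesic equation in its Euler--Lagrange form $\tfrac{d}{dt}L_{\cdot i}=L_{,i}$ directly from (\ref{geodesic_spray}), and then read off the pointwise identity $2\dot Q=(\mathcal{L}_{\xi^{\mathbf c}}L)(c,\dot c)$ valid along \emph{any} geodesic, with the proposition as an immediate corollary. What this buys: it avoids the nonlinear-connection machinery entirely and makes transparent that the same formula yields the classical Noether-type conservation law $\dot Q=0$ for Killing fields along arbitrary geodesics, and more generally $2\dot Q=\mu L$ for conformal fields. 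What the paper's route buys: it stays within the geometric apparatus already set up (horizontal/vertical splitting, $\nabla$), so no auxiliary computation such as your derivation of $\tfrac{d}{dt}L_{\cdot i}=L_{,i}$ is needed. Both are clean; your remark that $\mu$ depends on $x$ alone is the one point that genuinely deserves flagging, and you do so.
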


\begin{proof}
Take an arbitrary lightlike geodesic $c$ on $M$ and denote by $C=(c,\dot{c}%
), $ the lift of $c$ to $TM.$ Under the above made assumption that $c$ is
admissible, we can write $C:[a,b]\rightarrow A.$

Denote, for simplicity: $g:=g_{(c(t),\dot{c}(t))},$ $\nabla X:=(\nabla
X)_{(c(t),\dot{c}(t))}$ for $X\in \Gamma (VA)$ and $\nabla f:=\nabla
f_{(c(t),\dot{c}(t))}$ for smooth functions on $M.$ As $c$ is a geodesic, we
have, by (\ref{geodesic_eqn_nabla}), $\dot{C}=\dot{x}^{i}\delta _{i};$ hence,%
\begin{equation*}
\dfrac{df}{dt}=\dot{C}f=\dot{x}^{i}\delta _{i}f=\nabla f,~\ \ \ \ \forall
f:TM\rightarrow \mathbb{R}.
\end{equation*}%
Applying the above equality to:%
\begin{equation}
f:=g(\dot{c},\xi )=g^{v}(\dot{c}^{v},\xi ^{v}),  \label{rel_g_to_gv}
\end{equation}%
we get: $\dfrac{d}{dt}(g(\dot{c},\xi ))=\nabla (g^{v}(\dot{c}^{v},\xi ^{v}))$
and therefore, 
\begin{equation}
\dfrac{d}{dt}(g(\dot{c},\xi ))=\left( \nabla g^{v}\right) (\dot{c}^{v},\xi
^{v})+g^{v}((\nabla \dot{c}^{v},\xi ^{v})+g^{v}(\dot{c}^{v},\nabla \xi ^{v}).
\label{deriv_g_csi_c}
\end{equation}%
The first term in the right hand side is zero by (\ref{metricity_nabla}).
The second one is also zero since $c$ is a geodesic. It remains to evaluate $%
g(\dot{c}^{v},\nabla \xi ^{v}).$

Since $\xi $ is a conformal vector field, it obeys:\ $\mathcal{L}_{\xi ^{%
\mathbf{c}}}L=\mu L$ for some function $\mu .$ But, by hypothesis, $c$ is
lightlike, i.e., $L$ vanishes along $C.$ It follows:%
\begin{equation}
\mathcal{L}_{\xi ^{\mathbf{c}}}L=0~\text{\ on \ }C.  \label{zero_Lie_deriv}
\end{equation}%
Further, using (\ref{complete_lift_nabla}) for the Lie derivative $\mathcal{L%
}_{\xi ^{\mathbf{c}}}L=$ $\xi ^{\mathbf{c}}(L),$ relation (\ref%
{zero_Lie_deriv}) becomes:%
\begin{equation*}
\xi ^{h}(L)+(\nabla \xi ^{v})(L)=0.
\end{equation*}%
The term $\xi ^{h}(L)$ vanishes by (\ref{horizontal_derivs_L}), which leads
to $(\nabla \xi ^{v})(L)=0.$ In coordinates, this is: $(\nabla \xi
^{i})L_{\cdot i}=0.$ Taking into account (\ref{L_i}), we can write it as: $%
2g_{ij}y^{j}\nabla \xi ^{i}=0.$ Along $C,$ this is equivalent to:%
\begin{equation*}
g^{v}(\dot{c}^{v},\nabla \xi ^{v})=0.
\end{equation*}%
Substituting the latter relation into (\ref{deriv_g_csi_c}), we get:\ $%
\dfrac{d}{dt}g(\dot{c},\xi )=0,$ q.e.d.
\end{proof}

\section{Associated Riemannian metrics a useful lemma}

Consider a pseudo-Finsler space $(M,A,L)$, with metric tensor $%
g:A\rightarrow T_{2}^{0}M.$ For any admissible vector field $\xi \in \Gamma
(A),$ the mapping%
\begin{equation}
g^{\xi }:=g\circ \xi :M\rightarrow T_{2}^{0}M  \label{g^v}
\end{equation}%
defines a pseudo-Riemannian metric on $M,$ called an \textit{associated
(pseu\-do-)\-Rie\-man\-ni\-an metric }or, \cite{Stavrinos}, an \textit{%
osculating (pseudo)-Riemannian metric}.

\bigskip

Here are some immediate properties of metrics $g^{\xi },$ $\xi \in \Gamma
(A) $:

1. $g^{\xi }$ is defined and smooth on the whole base manifold $M$ (even if $%
g$ cannot be defined on the entire $TM\backslash \{0\}$).

2. $g^{\xi }$ has the same signature as $g.$

3. If, in particular, $g=g(x)$ is pseudo-Riemannian, then, all the metrics $%
g^{\xi },$ $\xi \in \Gamma (A)$ coincide (up to projection onto $M$) with $%
g, $ i.e., $g^{\xi }\circ \pi =g.$

4. If $g,\tilde{g}:A\rightarrow T_{2}^{0}M$ are conformally related, with
conformal factor $\sigma =\sigma (x),$ then%
\begin{equation}
\tilde{g}^{\xi }=e^{\sigma }g^{\xi },~\ \forall \xi \in \Gamma (A).
\label{conformal_g_csi}
\end{equation}

\bigskip

Let us analyze conformal point transformations associated with (\ref%
{conformal_g_csi}). With this aim, consider an arbitrary pseudo-Finsler
metric tensor $g$ and - for the moment - an arbitrary diffeomorphism $%
f:M\rightarrow M.$ Assume, as above, that $df(A)=A,$ denote $df:=df_{|A}$
and consider%
\begin{eqnarray}
\tilde{\xi} &:&=df\circ \xi \circ f^{-1}:M\rightarrow A,  \label{csi_tilde}
\\
\tilde{g} &:&=(df)^{\ast }g=T_{2}^{0}f\circ g\circ df:A\rightarrow
T_{2}^{0}M,
\end{eqnarray}%
the corresponding deformations of $\xi $ and $g.$ Noticing that the pullback 
$f^{\ast }(g^{\tilde{\xi}})$ of the pseudo-Riemannian metric $g^{\tilde{\xi}}
$ can be written as: $f^{\ast }(g^{\tilde{\xi}})=T_{2}^{0}f\circ g^{\tilde{%
\xi}}\circ f,$ we get:%
\begin{equation*}
f^{\ast }(g^{\tilde{\xi}})=T_{2}^{0}f\circ (g\circ \tilde{\xi})\circ
f=T_{2}^{0}f\circ g\circ df\circ \xi =\tilde{g}\circ \xi ,
\end{equation*}%
i.e., the left hand side of (\ref{conformal_g_csi}) is: 
\begin{equation}
\tilde{g}^{\xi }=f^{\ast }(g^{\tilde{\xi}}).  \label{f_star_g_v}
\end{equation}%
\bigskip In particular, if $f$ is a conformal map, then:%
\begin{equation}
f^{\ast }(g^{\tilde{\xi}})=e^{\sigma }g^{\xi }.  \label{f_star_g_conformal}
\end{equation}

Using (\ref{f_star_g_conformal}), we obtain:

\begin{lemma}
\label{conformal_vector_fields_lemma}If $\xi :M\rightarrow A$ is a conformal
vector field for a pseudo-Finsler metric structure $(M,A,L)$, with
1-parameter group $\left\{ \varphi _{\varepsilon }\right\} ,$ then:

(i) $\xi $ it is a conformal vector field for the pseudo-Riemannian metric $%
g^{\xi }.$

(ii) The conformal factor relating the pseudo-Finsler metrics $g$ and $%
\tilde{g}=(d\varphi _{\varepsilon })^{\ast }g$ is the same as the conformal
factor relating $g^{\xi }$ and $\varphi _{\varepsilon }^{\ast }(g^{\xi }).$
\end{lemma}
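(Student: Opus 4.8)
The plan is to deduce the Lemma directly from equation~(\ref{f_star_g_conformal}), applied to each element $\varphi_\varepsilon$ of the $1$-parameter group of $\xi$. Since $\xi$ is a conformal vector field for $(M,A,L)$, for every $\varepsilon$ the map $\varphi_\varepsilon$ is a conformal transformation of $g$, with some conformal factor $e^{\sigma_\varepsilon}$ (in the notation of~(\ref{1-parameter group})). First I would observe that the vector field $\widetilde{\xi}$ constructed in~(\ref{csi_tilde}) from $f=\varphi_\varepsilon$ is nothing but $\xi$ itself: indeed $\widetilde{\xi}=d\varphi_\varepsilon\circ\xi\circ\varphi_\varepsilon^{-1}$ is the push-forward of $\xi$ by its own flow, and a vector field is invariant under the flow it generates, so $d\varphi_\varepsilon\circ\xi\circ\varphi_\varepsilon^{-1}=\xi$ for all $\varepsilon$. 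This is the one conceptual point of the argument; everything else is substitution.

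With $\widetilde{\xi}=\xi$, formula~(\ref{f_star_g_conformal}) with $f=\varphi_\varepsilon$ and $\sigma=\sigma_\varepsilon$ reads
\begin{equation*}
\varphi_\varepsilon^{\ast}\bigl(g^{\xi}\bigr)=e^{\sigma_\varepsilon}\,g^{\xi},
\end{equation*}
which says precisely that $\varphi_\varepsilon$ is a conformal transformation of the pseudo-Riemannian metric $g^{\xi}$, with the very same conformal factor $e^{\sigma_\varepsilon}$ that relates $g$ and $(d\varphi_\varepsilon)^{\ast}g$. Since this holds for every $\varepsilon$ in the parameter interval and $\{\varphi_\varepsilon\}$ is the $1$-parameter group of $\xi$, this establishes (i): $\xi$ is a conformal vector field for $g^{\xi}$. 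Statement (ii) is immediate from the displayed equation, since the conformal factor appearing on its right-hand side is exactly $e^{\sigma_\varepsilon}$.

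There is no real obstacle here; the Lemma is essentially a repackaging of~(\ref{f_star_g_conformal}) once one notices the flow-invariance of $\xi$. The only point that deserves a word of care is a domain check: the identity~(\ref{f_star_g_conformal}) was derived under the standing convention that $df(A)=A$, so strictly one should note that each $d\varphi_\varepsilon$ maps $A$ to $A$ (which follows from~(\ref{1-parameter group}), since $\varphi_\varepsilon$ being a conformal transformation of $(M,A,L)$ presupposes $L\circ d\varphi_\varepsilon$ is defined on $A$), and that $g^{\xi}=g\circ\xi$ is genuinely a pseudo-Riemannian metric on all of $M$ by property~1 of associated Riemannian metrics. Granting this, the chain $\varphi_\varepsilon^{\ast}(g^{\xi})=\widetilde{g}^{\xi}=e^{\sigma_\varepsilon}g^{\xi}$ goes through verbatim and the proof is complete.
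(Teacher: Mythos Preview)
Your proposal is correct and follows essentially the same route as the paper: apply~(\ref{f_star_g_conformal}) with $f=\varphi_\varepsilon$, use the flow-invariance $\tilde{\xi}=\xi$ to obtain $\varphi_\varepsilon^{\ast}(g^{\xi})=e^{\sigma_\varepsilon}g^{\xi}$, and read off both statements from this identity. The only addition you make beyond the paper's own argument is the explicit domain remark, which is a welcome clarification rather than a departure.
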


\begin{proof}
\textit{(i) }Set:$\ \tilde{g}:=\left( d\varphi _{\varepsilon }\right) ^{\ast
}g,$ $\tilde{\xi}:=d\varphi _{\varepsilon }\circ \xi \circ \varphi
_{\varepsilon }^{-1}.$ By (\ref{f_star_g_conformal}), we have: $\varphi
_{\varepsilon }^{\ast }(g^{\tilde{\xi}})=e^{\sigma }g^{\xi }.$ But, the
vector field $\xi $ is invariant under its own flow, that is, $\tilde{\xi}%
=\xi $. We find: 
\begin{equation}
\varphi _{\varepsilon }^{\ast }(g^{\xi })=e^{\sigma }g^{\xi },
\label{g_tilde_phi_epsilon}
\end{equation}%
that is, $\xi $ is a conformal vector field for $g^{\xi }.$

\textit{(ii)} The statement follows from (\ref{g_tilde_phi_epsilon}).
\end{proof}

\section{Conformal and Killing vector fields}

Here is another property in pseudo-Riemannian geometry, \cite{Kuhnel survey}%
, which can be extended to pseudo-Finsler spaces:

\begin{proposition}
If a conformal vector field $\xi :M\rightarrow A$ for a pseudo-Finsler space 
$(M,A,L)$ is nowhere lightlike, then, $\xi $ is a Killing vector field for a
conformally related pseudo-Finsler structure.
\end{proposition}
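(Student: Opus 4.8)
The plan is to reduce the claim to its well-known pseudo-Riemannian counterpart via the associated Riemannian metric $g^{\xi}$ and Lemma \ref{conformal_vector_fields_lemma}. First I would record what "nowhere lightlike" buys us: the function $x\mapsto L(x,\xi(x))$ is smooth and nowhere zero on the connected manifold $M$, hence of constant sign; call it $\lambda(x):=L(x,\xi(x))\neq 0$. Equivalently, $\xi$ is everywhere timelike or everywhere spacelike, so $g^{\xi}(\xi,\xi)=\lambda$ has no zeros.

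Next I would pass to the associated pseudo-Riemannian metric $h:=g^{\xi}$ on $M$. By Lemma \ref{conformal_vector_fields_lemma}(i), $\xi$ is a conformal vector field for $h$, i.e.\ $\mathcal{L}_{\xi}h=\mu\, h$ for the same $\mu$ (from (\ref{Lie_deriv_L})) that controls the pseudo-Finslerian conformal deformation. The standard pseudo-Riemannian fact (Kühnel–Rademacher, as cited via \cite{Kuhnel survey}) is that a conformal field which is nowhere null for $h$ can be rescaled to a Killing field of a conformally equivalent metric: one seeks $\varphi\in\mathcal{C}^{\infty}(M)$ so that $\hat h:=e^{\varphi}h$ has $\mathcal{L}_{\xi}\hat h=0$. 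Computing $\mathcal{L}_{\xi}\hat h=e^{\varphi}\big(\xi(\varphi)+\mu\big)h$, the requirement is simply the first-order linear PDE $\xi(\varphi)=-\mu$; because $\xi$ is nowhere zero, this is locally solvable by straightening $\xi$ to $\partial_{0}$ and integrating, and on all of $M$ one can in fact solve it in closed form using the contraction with $\xi$ itself: since $g^{\xi}(\xi,\xi)=\lambda$ is nowhere zero, contracting $\mathcal{L}_{\xi}h=\mu h$ with $\xi\otimes\xi$ expresses $\mu$ in terms of derivatives of $\lambda$, which lets one take $\varphi=-\ln|\lambda|+\text{const}$ (up to the usual check that this indeed kills the Lie derivative). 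I would carry out this contraction explicitly rather than invoke general PDE theory, since it gives an honest global $\varphi$.

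The final step is to transport $\varphi$ back to the pseudo-Finsler side. Set $\tilde L:=e^{\varphi}L$ and let $\tilde g=e^{\varphi}g$ be its metric tensor; this is a genuine pseudo-Finsler structure on $(M,A)$ since multiplying $L$ by a positive function of $x$ alone preserves $2$-homogeneity in $y$ and the signature of $g_{ij}$. By Lemma \ref{conformal_vector_fields_lemma}(ii) together with property 4 of associated metrics (equation (\ref{conformal_g_csi})), the associated Riemannian metric of $\tilde g$ along $\xi$ is $\tilde g^{\xi}=e^{\varphi}g^{\xi}=\hat h$, and the conformal factor relating $g$ to $(d\varphi_{\varepsilon})^{\ast}g$ matches the one relating $g^{\xi}$ to $\varphi_{\varepsilon}^{\ast}g^{\xi}$; since the latter is trivial ($\xi$ is Killing for $\hat h$), differentiating at $\varepsilon=0$ gives $\mathcal{L}_{\xi^{\mathbf c}}\tilde L=0$, i.e.\ $\xi$ is Killing for $\tilde L$. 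I should also note that $e^{\varphi}$ must be written as $e^{\sigma'}$ with $\sigma'=\varphi$ to match the paper's sign conventions in (\ref{conformal_def_L}); this is cosmetic.

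The main obstacle I anticipate is the exact bookkeeping in the middle step: producing a \emph{global} smooth $\varphi$ solving $\xi(\varphi)=-\mu$ and verifying $\mathcal{L}_{\xi}(e^{\varphi}h)=0$ cleanly. The contraction trick with $g^{\xi}(\xi,\xi)=\lambda$ should do it, but one must be careful that the identity $\mu\lambda = \xi(\lambda)$ (or whatever precise relation the contraction yields) is exactly what makes $\varphi=-\ln|\lambda|$ work — and one must confirm that $\mathcal{L}_{\xi}h=\mu h$ really holds with the same $\mu$ as on the Finsler side, which is precisely the content of Lemma \ref{conformal_vector_fields_lemma}(ii) and so is available. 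Everything else is formal transport through the two lemmas already proved.
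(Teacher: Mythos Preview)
Your proposal is correct, but it takes a considerably longer route than the paper's proof, and the detour through Lemma~\ref{conformal_vector_fields_lemma} is not needed here.

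The paper works directly with the Finsler Lagrangian. It sets $\alpha(x):=L(x,\xi(x))$ (your $\lambda$), defines $\tilde L:=L/\alpha$, and computes $\mathcal{L}_{\xi^{\mathbf c}}\tilde L$ by the Leibniz rule. The only nontrivial ingredient is the identity $\mathcal{L}_{\xi^{\mathbf c}}\alpha=\mu\alpha$, which is obtained simply by evaluating the conformal relation $\mathcal{L}_{\xi^{\mathbf c}}L=\mu L$ at $y=\xi(x)$ (noting that on functions of $x$ alone, $\mathcal{L}_{\xi^{\mathbf c}}$ acts as $\xi$). That is exactly your ``contraction trick'' $\xi(\lambda)=\mu\lambda$, but performed on $L$ itself rather than on the osculating metric $g^{\xi}$; once you have it, $\mathcal{L}_{\xi^{\mathbf c}}(L/\alpha)=0$ is immediate.

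Your route --- pass to $h=g^{\xi}$, use the pseudo-Riemannian result to produce $\varphi=-\ln|\lambda|$, then invoke Lemma~\ref{conformal_vector_fields_lemma}(ii) to transport back --- yields the very same rescaling $\tilde L=e^{\varphi}L=L/|\alpha|$, so nothing is lost. What the paper's approach buys is economy: no associated metric, no appeal to \cite{Kuhnel survey}, and no need to argue that the conformal factor on the Finsler side matches the one on the Riemannian side. What your approach illustrates is that the proposition really is the Finslerian avatar of the classical pseudo-Riemannian statement; but for the actual proof, the direct two-line computation on $L$ suffices.
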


\begin{proof}
As $\xi $ is a conformal vector field for $L,$ we have, at any $(x,y)\in A:$%
\begin{equation}
(\mathcal{L}_{\xi ^{\mathbf{c}}}L)\left( x,y\right) =\mu L(x,y).
\label{csi_y}
\end{equation}%
Using the hypothesis that $L$ is nowhere lightlike, the quantity $\alpha
(x):=L(x,\xi (x))$ does not vanish. Set:%
\begin{equation*}
\tilde{L}(x,y):=\dfrac{1}{\alpha (x)}L(x,y):A\rightarrow \mathbb{R}.
\end{equation*}%
Taking the Lie derivative of $\tilde{L}$: $(\mathcal{L}_{\xi ^{\mathbf{c}}}%
\tilde{L})=\mathcal{L}_{\xi ^{\mathbf{c}}}(\dfrac{1}{\alpha })L+\dfrac{1}{%
\alpha }\mathcal{L}_{\xi ^{\mathbf{c}}}(L)$ and noticing that $\mathcal{L}%
_{\xi ^{\mathbf{c}}}(a)=-\mu \alpha ,$ $\mathcal{L}_{\xi ^{\mathbf{c}}}L=\mu
L,$ we get: 
\begin{equation*}
(\mathcal{L}_{\xi ^{\mathbf{c}}}\tilde{L})=-\dfrac{1}{\alpha ^{2}}\mu \alpha
L+\dfrac{1}{\alpha }\mu L(y)=0,
\end{equation*}%
i.e., $\xi $ is a Killing vector field for $\tilde{L}$.
\end{proof}

\textbf{Remark. }A conformal vector field for a pseudo-Finsler metric $L$ is
called \textit{essential }if it is not a Killing vector field for any
conformally related metric to $L.$ That is: any essential pseudo-Finslerian
conformal vector field must be lightlike at least at a point.

\bigskip

Passing to Killing vector fields, let us mention the following results due
to Sanchez, \cite{Sanchez}, in Riemannian geometry:

\begin{proposition}
\label{Sanchez1}, \cite{Sanchez}: Let $(M,g)$ be a Lorentzian manifold with
a non-spacelike (at any point)\ Killing vector field $\xi $. If $\xi _{p}=0$
for some $p\in M,$ then $\xi $ vanishes identically.
\end{proposition}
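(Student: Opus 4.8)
The statement is the Lorentzian version of a rigidity result: a causal Killing field cannot have an isolated zero unless it is trivial. Since this Proposition is attributed to Sánchez and stated in the purely Riemannian/Lorentzian setting (not yet pseudo-Finslerian), I would reconstruct the classical argument. The plan is to linearize the Killing field at the putative zero $p$ and exploit the fact that a Killing field is determined, as a Jacobi-type field, by its value and its covariant derivative at a single point; then use the causal character to force the linear part to vanish as well.

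First I would work in a normal neighborhood of $p$. Since $\xi_p = 0$, the flow $\varphi_\varepsilon$ of $\xi$ fixes $p$, so its differential $d(\varphi_\varepsilon)_p$ is a one-parameter group of linear isometries of $(T_pM, g_p)$, hence generated by a skew-symmetric endomorphism $A := \nabla\xi|_p \in \mathfrak{so}(g_p)$. The key step is to show $A = 0$. Here the causal hypothesis enters: consider the function $t \mapsto g(\xi,\xi)$ along a geodesic $\gamma$ emanating from $p$ with $\gamma(0)=p$. Near $p$ one has the expansion $\xi(\gamma(s)) = sA\dot\gamma(0) + O(s^2)$ (using $\nabla\xi|_p = A$ and $\nabla_{\dot\gamma}\dot\gamma = 0$), so $g(\xi,\xi)(\gamma(s)) = s^2\, g_p(A\dot\gamma(0), A\dot\gamma(0)) + O(s^3)$. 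Because $\xi$ is non-spacelike everywhere, $g(\xi,\xi) \geq 0$ (with the paper's timelike-positive convention), which forces $g_p(Av, Av) \geq 0$ for every $v \in T_pM$. But $A$ is $g_p$-skew-symmetric, so its image carries an indefinite or degenerate quadratic form unless $A = 0$: choosing $v$ in a suitable plane one produces a spacelike vector $Av$ unless $A$ annihilates all such planes. Making this dichotomy precise — that a nonzero skew-symmetric endomorphism of Lorentzian $T_pM$ always has a spacelike vector in its image — is the main obstacle, and I would handle it by the normal-form classification of elements of $\mathfrak{so}(1,n-1)$ (rotational, boost, and null-rotation types), checking in each case that the image contains a spacelike direction.

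Once $A = \nabla\xi|_p = 0$ is established, I would invoke the standard fact that a Killing field satisfies the second-order linear ODE $\nabla_X\nabla_Y\xi = -R(\xi,X)Y$ along geodesics (equivalently, $\nabla^2\xi = -R(\xi,\cdot\,)\cdot$), so that $\xi$ restricted to any geodesic through $p$ is the unique solution of a linear Jacobi-type equation with initial data $\xi(p) = 0$, $\nabla\xi|_p = 0$; hence $\xi \equiv 0$ on the normal neighborhood of $p$. A connectedness argument then propagates this: the set $\{x \in M : \xi_x = 0 \text{ and } \nabla\xi|_x = 0\}$ is closed (obviously) and open (by the normal-neighborhood argument just given, since at any such point the skew-part vanishes a fortiori), and $M$ is connected, so $\xi$ vanishes identically on $M$. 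I expect the writing to be short apart from the $\mathfrak{so}(1,n-1)$ normal-form case check, which is where the Lorentzian signature is genuinely used and which distinguishes this from the (false) Riemannian analogue.
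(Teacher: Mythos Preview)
The paper does not supply a proof of this proposition at all: it is quoted verbatim from S\'anchez's article and used as a black box, the paper's only contribution at this point being to apply Lemma~\ref{conformal_vector_fields_lemma} so as to transfer the statement to Finsler spacetimes via the osculating metric $g^{\xi}$. So there is no ``paper's own proof'' to compare your proposal against.

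That said, your reconstruction is a correct outline of the classical argument. The one step you flag as the main obstacle --- showing that a nonzero $A\in\mathfrak{so}(g_p)$ must have a spacelike vector in its image --- can in fact be handled more cheaply than a full normal-form case analysis: if $g_p(Av,Av)\geq 0$ for every $v$, then $\mathrm{im}\,A$ lies in the causal cone, and the only linear subspaces of a Lorentzian vector space contained in that cone are $\{0\}$ and null lines; a short skew-symmetry computation then rules out the null-line case. The rest (the Jacobi-type propagation and the open--closed argument on $\{\xi=0,\ \nabla\xi=0\}$) is standard and exactly as you describe.
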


\begin{theorem}
\label{Sanchez 2}, \cite{Sanchez}: If $\xi $ is a Killing vector field on a
Lorentzian manifold $(M,g),$ admitting an isolated zero at some point $p\in
M,$ then, the dimension of $M$ is even and $\xi $ becomes timelike,
spacelike and null on each neighborhood of $p.$
\end{theorem}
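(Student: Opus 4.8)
The plan is to reduce the statement to a linear-algebra problem on the tangent space $T_pM$ by exploiting the rigidity of isometries. Let $\{\varphi _t\}$ be the local flow of $\xi$; since $\xi$ is Killing each $\varphi _t$ is an isometry of $(M,g)$, and since $\xi _p=0$ the point $p$ is a fixed point of every $\varphi _t$. The first step is to record that at a fixed point an isometry is conjugate to its own differential through the exponential map: because isometries carry geodesics to geodesics, one has, for small $v\in T_pM$, $\varphi _t(\exp _p v)=\exp _p\big(d(\varphi _t)_p\,v\big)$. Differentiating at $t=0$ gives $\xi (\exp _p v)=d(\exp _p)_v\big(A\,v\big)$, where $A:=(\nabla \xi )_p$ is the linearization of $\xi$ at $p$. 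Evaluating the Killing equation $\nabla _i\xi _j+\nabla _j\xi _i=0$ at the zero $p$ shows that $A$ is skew-symmetric with respect to $g_p$, i.e. $g_p(Au,w)=-g_p(u,Aw)$.

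Next I would show that the hypothesis of an isolated zero forces $A$ to be invertible. If $Av_0=0$ for some $v_0\neq 0$, then $d(\varphi _t)_p$ fixes $v_0$ for all $t$, so by the conjugation formula $\varphi _t$ fixes $\exp _p(v_0)$, whence $\xi$ vanishes at $\exp _p(v_0)$. Replacing $v_0$ by $\lambda v_0$ yields zeros of $\xi$ arbitrarily close to $p$, contradicting isolatedness; hence $\det A\neq 0$. The even-dimensionality is then immediate: skew-symmetry gives $A^{T}=-GAG^{-1}$, where $G$ is the Gram matrix of $g_p$, so that $\det A=\det A^{T}=(-1)^n\det A$, and since $\det A\neq 0$ the integer $n$ must be even.

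For the causal claim, consider $N(q):=g_q(\xi (q),\xi (q))$, which vanishes at $p$ together with its first derivatives. Its leading (quadratic) part at $p$ is the quadratic form $Q(v)=g_p(Av,Av)$, whose matrix $A^{T}GA$ is congruent to $G$ via the invertible $A$; by Sylvester's law $Q$ therefore has exactly the Lorentzian signature of $g_p$, so $\{Q>0\}$ and $\{Q<0\}$ are nonempty open cones. On a small closed subcone of $\{Q>0\}$ one has $Q(v)\ge c|v|^2$, so $N=Q+O(|v|^3)$ is positive there once $|v|$ is small; the same argument in $\{Q<0\}$ produces points with $N<0$ arbitrarily close to $p$. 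Consequently, for every small $r>0$ the continuous function $N$ takes both signs on the sphere $\{|v|=r\}$, which is connected because $n\ge 2$, hence $N$ vanishes somewhere on it as well. Thus $\xi$ is timelike, spacelike and null at points arbitrarily close to $p$.

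The main obstacle is the conjugation-via-exponential step together with the implication that an isolated zero forces $A$ to be invertible: these are exactly the places where the Killing (isometry) hypothesis is indispensable, since for a general vector field an isolated zero need not have invertible linearization. Once $A$ is known to be a nondegenerate $g_p$-skew-symmetric endomorphism, the parity and causal-character conclusions are routine, the only mild care being the perturbation argument that the cubic remainder in $N$ does not disturb the sign pattern dictated by the nondegenerate indefinite form $Q$.
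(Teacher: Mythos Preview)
The paper does not prove this theorem: it is quoted, with attribution, from S\'anchez \cite{Sanchez} and used only as a black box in the subsequent Finsler-spacetime extension (where the result is applied to the associated Lorentzian metric $g^{\xi}$). There is therefore no paper-proof to compare against.

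Your proposal supplies a correct self-contained proof of the cited result. The argument follows the classical route: linearize the Killing flow at the fixed point via $\varphi_t\circ\exp_p=\exp_p\circ d(\varphi_t)_p$, identify $d(\varphi_t)_p=e^{tA}$ with $A=(\nabla\xi)_p$ skew-symmetric with respect to $g_p$, use isolatedness of the zero to force $\ker A=0$, deduce even dimension from $\det A=(-1)^n\det A\neq 0$, and read off the causal character of $\xi$ near $p$ from the quadratic form $Q(v)=g_p(Av,Av)$, which is congruent to $g_p$ via the invertible $A$ and hence Lorentzian. The perturbation step (that the $O(|v|^3)$ remainder in $N$ does not spoil the sign pattern of $Q$ on closed subcones) and the intermediate-value argument on the connected sphere $\{|v|=r\}$ are both legitimate once $n\ge 2$ is established.
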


Now, using Lemma \ref{conformal_vector_fields_lemma}, the extensions to
pseudo-Finsler spaces of the above results become simple corollaries:

\begin{proposition}
Let $(M,A,L)$ be a Finsler spacetime, with a non-spacelike (at any point)
Killing vector field $\xi $. If $\xi =0$ at some point $p\in M,$ then $\xi $
vanishes identically.
\end{proposition}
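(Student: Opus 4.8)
The plan is to reduce the statement to its pseudo-Riemannian counterpart (Proposition \ref{Sanchez1}) via Lemma \ref{conformal_vector_fields_lemma} and the associated Riemannian metric construction. Concretely, given the Finsler spacetime $(M,A,L)$ with Killing vector field $\xi$, form the associated pseudo-Riemannian metric $g^{\xi}=g\circ\xi:M\rightarrow T^{0}_{2}M$. By property 2 of associated metrics, $g^{\xi}$ has the same signature as $g$; since $(M,A,L)$ is a Finsler spacetime (i.e.\ $q=n-1$), $g^{\xi}$ is a Lorentzian metric on $M$. By Lemma \ref{conformal_vector_fields_lemma}(i), since a Killing vector field is in particular a conformal vector field, $\xi$ is a conformal vector field for $g^{\xi}$; and by part (ii), the conformal factor is the same one relating $g$ and its deformations, which is trivial because $\xi$ is Killing for $L$. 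Hence $\xi$ is in fact a \emph{Killing} vector field for $g^{\xi}$.

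Next I would check the causal character hypothesis transfers correctly. The key observation is that $g^{\xi}_{x}(\xi(x),\xi(x)) = g_{(x,\xi(x))}(\xi(x),\xi(x)) = L(x,\xi(x))$, using the identity $g_{ij}(x,y)y^{i}y^{j}=L(x,y)$ that follows from $2$-homogeneity of $L$ (Euler's theorem applied twice, cf.\ (\ref{L_i})). So the pseudo-Finslerian causal character of the admissible vector $\xi(x)$ — timelike, null, or spacelike according to the sign of $L(x,\xi(x))$ — coincides with the $g^{\xi}$-causal character of $\xi(x)$ as a vector in the Lorentzian manifold $(M,g^{\xi})$. Therefore ``$\xi$ non-spacelike at every point'' for $(M,A,L)$ translates verbatim into ``$\xi$ non-spacelike at every point'' for $(M,g^{\xi})$.

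Then I would simply invoke Proposition \ref{Sanchez1}: on the Lorentzian manifold $(M,g^{\xi})$ we have a non-spacelike Killing field $\xi$ vanishing at $p$, hence $\xi$ vanishes identically on $M$. This is exactly the desired conclusion.

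The main subtlety — and the one point that needs a careful sentence rather than a wave of the hand — is making sure the associated metric $g^{\xi}$ is genuinely smooth and well-defined as a Lorentzian metric \emph{everywhere} on $M$, including at the zero $p$ of $\xi$. Here one must be slightly careful: $\xi:M\rightarrow A$ is required to be an admissible vector field, but at a point where $\xi$ vanishes, $\xi(p)=0\notin A$. I would handle this by noting that the relevant statement is local and that away from $p$ everything goes through directly; at $p$ itself the conclusion $\xi\equiv 0$ is obtained by applying Proposition \ref{Sanchez1} on a neighborhood, or one simply restricts attention to $M\setminus\{p\}$, shows $\xi$ vanishes on a neighborhood of $p$ punctured, and concludes by continuity. (Alternatively, if one takes the convention that a ``conformal/Killing vector field'' is allowed to vanish, then $g^{\xi}$ is extended by continuity at the zero set, as in property 1 of associated metrics; this is the reading I would adopt, with a remark pointing it out.) Everything else is a routine transfer of hypotheses.
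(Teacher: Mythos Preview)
Your proposal is correct and follows essentially the same route as the paper's proof: reduce to the Lorentzian case via the associated metric $g^{\xi}$ and Lemma \ref{conformal_vector_fields_lemma}, then invoke Proposition \ref{Sanchez1}. Your treatment is in fact more careful than the paper's --- you explicitly verify the causal-character transfer via $g^{\xi}_{x}(\xi,\xi)=L(x,\xi(x))$ and flag the admissibility issue at the zero $p$, neither of which the paper spells out.
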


\begin{proof}
Since $\xi $ is a Killing vector field for $L,$ it follows from Lemma \ref%
{conformal_vector_fields_lemma} that $\xi $ is a Killing vector field for
the pseudo-Riemannian metric $g^{\xi }.$ But, since the signature of $g^{\xi
}$ coincides with the one of $L,$ $g^{\xi }$ is Lorentzian. The statement
now follows from Proposition \ref{Sanchez1}.
\end{proof}

\begin{theorem}
If $\xi $ is a Killing vector field for a Finsler spacetime $(M,A,L),$
admitting an isolated zero at some point $p\in M,$ then, the dimension of $M$
is even and $\xi $ becomes timelike, spacelike and null on each neighborhood
of $p.$
\end{theorem}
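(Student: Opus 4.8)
The plan is to mimic exactly the strategy used in the preceding corollary (the pseudo-Finslerian version of Proposition~\ref{Sanchez1}): reduce to the pseudo-Riemannian statement of Theorem~\ref{Sanchez 2} via the associated Riemannian metric $g^{\xi}$ and Lemma~\ref{conformal_vector_fields_lemma}. First I would invoke Lemma~\ref{conformal_vector_fields_lemma}(i): since $\xi$ is a Killing vector field for $(M,A,L)$ (a conformal vector field with conformal factor identically $1$), it is also a Killing vector field for the pseudo-Riemannian metric $g^{\xi}:=g\circ\xi$ on $M$. By property~2 of associated Riemannian metrics, $g^{\xi}$ has the same signature as $g$; as $(M,A,L)$ is a Finsler spacetime ($q=n-1$), $g^{\xi}$ is a Lorentzian metric on $M$.

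Next I would check that the zero of $\xi$ is still an \emph{isolated} zero when $\xi$ is regarded as a vector field on the Lorentzian manifold $(M,g^{\xi})$. This is immediate: the set of zeros of $\xi$ is a property of the vector field $\xi:M\to A\subset TM$ alone, entirely independent of which metric we attach to $M$; so if $p$ is an isolated zero of $\xi$ in $(M,A,L)$, it is an isolated zero of the Killing field $\xi$ of $(M,g^{\xi})$. Applying Theorem~\ref{Sanchez 2} to $(M,g^{\xi})$ and $\xi$ then yields at once that $\dim M$ is even and that $\xi$ is timelike, spacelike and null (with respect to $g^{\xi}$) on every neighborhood of $p$.

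The last step is to transfer the causal-character conclusion from $g^{\xi}$ back to $L$. Here one uses that $L(x,\xi(x)) = g_{(x,\xi(x))}(\xi(x),\xi(x)) = g^{\xi}_x(\xi(x),\xi(x))$ by the very definition $g^{\xi}=g\circ\xi$ together with the $2$-homogeneity identity $L(x,y)=g_{ij}(x,y)y^iy^j$ (Euler's theorem applied twice to~(\ref{L_i})). Hence the sign of $L(x,\xi(x))$ coincides with the sign of $g^{\xi}_x(\xi(x),\xi(x))$ at every $x$, so ``$\xi$ is timelike / spacelike / null at $x$'' means the same thing whether measured by $L$ or by $g^{\xi}$. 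Consequently the statement of Theorem~\ref{Sanchez 2} for $(M,g^{\xi})$ is literally the statement we want for $(M,A,L)$, and the proof is complete.

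The only genuinely delicate point — and the one I would state carefully rather than gloss over — is the observation in the previous paragraph that the Finslerian and Riemannian notions of causal type for the field $\xi$ agree pointwise; everything else is a direct citation of Lemma~\ref{conformal_vector_fields_lemma} and Theorem~\ref{Sanchez 2}. One should also note that $g^{\xi}$ is smooth on all of $M$ (property~1), so that Theorem~\ref{Sanchez 2} genuinely applies even when $g$ itself is not defined on all of $TM\setminus\{0\}$; this is exactly the role the associated Riemannian metric is designed to play.
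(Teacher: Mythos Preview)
Your proposal is correct and follows essentially the same route as the paper's own proof: invoke Lemma~\ref{conformal_vector_fields_lemma} to make $\xi$ Killing for the Lorentzian metric $g^{\xi}$, use the identity $L(\xi)=g^{\xi}(\xi,\xi)$ to match causal types, and apply Theorem~\ref{Sanchez 2}. Your write-up is simply a more detailed version of the paper's argument, spelling out the isolated-zero and smoothness points that the paper leaves implicit.
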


\begin{proof}
Assume $\xi $ is a Killing vector field for $(M,A,L),$ with an isolated zero
at some $p\in M.$ Then, $\xi $ is also a Killing vector for the Lorentzian
metric $g^{\xi }$ on $M$ and 
\begin{equation*}
L(\xi )=g^{\xi }(\xi ,\xi ),
\end{equation*}%
which means that $\xi $ is timelike (respectively, null, spacelike)\ for $L$
iff it is timelike (resp., null, spacelike)\ for $g^{\xi }.$ The result now
follows from Theorem \ref{Sanchez 2}.
\end{proof}

\bigskip

\textbf{Acknowledgment. }The work was supported by a local grant of the 
\textit{Transilvania }University of Brasov.


\begin{thebibliography}{99}
\bibitem{Aikou} T. Aikou, \textit{Locally Conformal Berwald Spaces and Weyl
Structures}, Publ. Math. Debrecen 49(1-2) (1996) 113-126.

\bibitem{Alekseevsky} D. Alekseevsky, \textit{Lorentzian manifolds with
transitive conformal group}, Note Mat. 37 (2017) suppl. 1, 35--47.

\bibitem{Anto} Antonelli, P.L, Ingarden, R.S., Matsumoto, M., \textit{The
Theory of Sprays and Finsler Spaces with Applications in Physics and Biology}%
, Kluwer Acad. Publ., 1993.

\bibitem{Asanjarani} A. Asanjarani, B. Bidabad, \textit{Classification of
complete Finsler manifolds through a second order differential equation, }%
Differential Geometry and its Applications 26(4) (2008), 434-444.

\bibitem{Asanov} G.S. Asanov, \textit{Finsler geometry, relativity and gauge
theories}, D. Reidel Publishing Co., Dordrecht, 1985.

\bibitem{Cheng} S. Bacso, Xinyue Cheng, \textit{Finsler conformal
transformations and the curvature invariances, }Publ. Math. Debrecen 70/1-2
(2007), 221-231.

\bibitem{Barletta} E. Barletta, S. Dragomir, \textit{Gravity as a Finslerian
Metric Phenomenon, }Found Phys 01/2011; 25:1139-1158.

\bibitem{Bejancu} A. Bejancu, H.R. Farran, \textit{Geometry of
Pseudo-Finsler Submanifolds}, Springer, 2000.

\bibitem{Bidabad} B. Bidabad, P. Joharinad, \textit{Conformal vector fields
on complete Finsler spaces of constant Ricci curvature}, Differential
Geometry and its Applications 33(2014), 75--84.

\bibitem{Bogoslovsky} V. Balan, G. Yu. Bogoslovsky, S. S. Kokarev, D. G.
Pavlov, S. V. Siparov, N. Voicu, \textit{Geometrical Models of the Locally
Anisotropic Space-Time}, Journal of Modern Physics, 3(9A), 2012.

\bibitem{Bucataru} I. Bucataru, R. Miron, \textit{Finsler-Lagrange geomery.
Applications to dynamical systems, }Ed. Acad. Romane, 2007.

\bibitem{Camargo} F. Camargo, A. Caminha, H. de Lima, M. Velazquez, \textit{%
On the geometry of conformally stationary Lorentz spaces, }Acta Mathematica
Hungarica 134(3) (2010)

\bibitem{Caminha} A. Caminha, \textit{The geometry of closed conformal
vector fields on Riemannian spaces}, A. Bull Braz Math Soc, New Series
(2011) 42: 277.

\bibitem{Crampin} M. Crampin, \textit{On the construction of Riemannian
metrics for Berwald spaces by averaging, }Houston J. Math. 40(3), 737-750
(2014).

\bibitem{Sardanashvili} G. Giachetta, L. Mangiarotti, G. Sardanashvily, 
\textit{Advanced Classical Field Theory, }World Scientific, 2009.

\bibitem{Gibbons} G. W. Gibbons, J. Gomis, C. N. Pope, \textit{General Very
Special Relativity is Finsler Geometry}, Physics Review D,76(8) (2007), 1-5.

\bibitem{Haantjes} J. Haantjes, \textit{Conformal representations of an
n-dimensional Euclidean space with a non-definite fundamental form on
itself, }Nederl. Akad. Wetensch Proc. (1937), 700-705.

\bibitem{Hashiguchi} M. Hashiguchi, \textit{On conformal transformations of
Finsler metrics}, J . Math. Kyoto Univ. 16-1 (1976), 25-50.

\bibitem{Herrera} J. Herrera, M. A. Javaloyes, P. Piccione, \textit{On a
monodromy theorem for sheaves of local vector fields and applications},
arXiv:1507.03635 [math.DG], 2015.

\bibitem{Javaloyes} M. Javaloyes, M. Sanchez, \textit{Finsler metrics and
relativistic spacetimes, }Int. J. Geom. Methods Mod. Phys. 11 (2014),
1460032 .

\bibitem{Kostelecky} A. Kostelecky, \textit{Riemann-Finsler geometry and
Lorentz-violating kinematics}, Phys. Lett. B 701:137-143, 2011.

\bibitem{Kouretsis} A.P. Kouretsis, M. Stathakopoulos, P.C. Stavrinos, 
\textit{Covariant kinematics and gravitational bounce in Finsler
space-times, }Phys.Rev. D86 (2012) 124025.

\bibitem{Kuhnel survey} W. Kuhnel, H-B. Rademacher, \textit{Conformal
Transformations of Pseudo-Riemannian manifolds, in vol. Recent developments
in pseudo-Riemannian geometry} (D.Alekseevskii, H.Baum, eds.), ESI Lect. in
Math. and Phys., 2008.

\bibitem{Matsumoto} M. Matsumoto, \textit{Foundations of Finsler Geometry
and Special Finsler Spaces, }Kaiseisha Press, 1986.

\bibitem{Matveev} V. Matveev, M. Troyanov, \textit{The Binet-Legendre metric
in Finsler geometry, }Geometry\&Topology 16 (2012), 2135--2170.

\bibitem{Pavlov} D. Pavlov, S. Kokarev, \textit{Conformal Gauges of the
Berwald-Moor Geometry and Nonlinear Symmetries Induced by Them},
Hypercomplex Numbers in Geometry and Physics 5(2) (2008), 3-14.

\bibitem{Sanchez} M. Sanchez, \textit{Lorentzian manifolds admitting a
Killing vector field}, Nonlinear Analysis, Theory, Methods \& Applications
30(1) (1997), 643-654.

\bibitem{Stavrinos} P.C. Stavrinos, \textit{On the Generalized Metric
Structure of Space-Time: Finslerian Anisotropic Gravitational Field},
Journal of Physics: Conference Series 8 (2005) 49--57.

\bibitem{Szilasi} J. Szilasi, Cs. Vincze, \textit{On conformal equivalence
of Riemann-Finsler metrics}, Publ. Math. Debrecen 52(1-2) (1998), 167--185.

\bibitem{Torrome-isometries} R. Gallego Torrome, P. Piccione, \textit{On the
Lie group structure of Finsler isometries, }Houston J. Math. 41(2) (2015),
513-521.

\bibitem{Vacaru} Vacaru, S., P. Stavrinos, E. Gaburov, D. Gonta, \textit{%
Clifford and Riemann Finsler Structures in Geometric Mechanics and Gravity},
Geometry Balkan Press, Bucharest, 2006.

\bibitem{Youssef} N. Youssef, S. H. Abed, A. Soleiman, \textit{A global
theory of conformal Finsler geometry, }Tensor, N. S. 69 (2008), 155-178.

\bibitem{Zhang} X. Zhang, \textit{Conformal transformation between some
Finsler Einstein spaces, }Journal of East China Normal University (Natural
Science) 2(3) (2013), 160-166.
\end{thebibliography}
\end{document}